\numberwithin{equation}{section}
\theoremstyle{plain}% default
\newtheorem{thm}{Theorem}[section]
\newtheorem{theorem}[thm]{Theorem}
\newtheorem{lemma}[thm]{Lemma}
\theoremstyle{remark}
\newtheorem{remark}[thm]{Remark}
\title{On the near periodicity of eigenvalues of Toeplitz matrices}
\author{Michael Levitin}
\address{Department of Mathematics, University of Reading, Whiteknights, PO Box 220, Reading
RG6 6AX, United Kingdom}
\email{M.Levitin@reading.ac.uk}
\author{Alexander V. Sobolev}
\address{Department of Mathematics, University College London, Gower Street, London WC1E 6BT, United
Kingdom}
\email{asobolev@math.ucl.ac.uk}
\author{Daphne Sobolev}
\address{Maths \& Computing Faculty. The Open University in London, 1-11 Hawley Crescent, London NW1
8NP, United Kingdom}
\email{ds8788@tutor.open.ac.uk}
\begin{document}
\maketitle

\section{Introduction}

In order to compute the spectrum of a self-adjoint operator $A$ in a Hilbert space $H$
one can approximate $A$ by a sequence of finite-dimensional operators
$A_n = P_n AP_n$, $n = 1, 2, \dots$, where $P_n$ are orthogonal
projections on finite-dimensional subspaces of $H$ with the property $P_n\to I$ in the
strong sense as $n\to\infty$. However, it is well known that the operators $A_n$ may have eigenvalues which in the limit do not converge to spectral points of $A$. Such eigenvalues are termed \textsl{spurious eigenvalues} and their presence is often described as \textsl{spectral pollution} (see e.g. \cite{Dav}, \cite{Lev}). The spectral pollution usually happens in spectral gaps of the operator $A$.
This phenomenon has been extensively studied both in the abstract
setting (see e.g. \cite{Pokr}, \cite{Desc}), and in various special cases
(see e.g. \cite{Daug}, \cite{Rapp}, \cite{Lewin}). In particular, it was shown in \cite{Pokr}, \cite{Desc}, \cite{Sharg}, and \cite{Lev} that the spectral pollution may occur at any point in a gap of the essential spectrum of $A$.

Perhaps, the simplest example illustrating the spectral pollution, is the
classical Toeplitz matrix. Let $a$ be a real-valued piecewise continuous
function on the interval $[-\pi, \pi)$, and let $P_n$ be the orthogonal
projection in $L^2(-\pi, \pi)$ on the subspace spanned by the
exponentials $(2\pi)^{-1/2}e^{-ikx}$, $k = 1, 2, \dots, n$. We define the
Toeplitz operator with the symbol $a$ as
\[
T_n = T_n[a] = P_n A P_n,
\]
where $A$ is the operator of multiplication by $a$. If the range of the function $a$
is a disconnected set, then the spurious eigenvalues, in the limit $n\to\infty$,
fill in the gaps
separating the components of the range. More precisely, it can be inferred from
\cite{Basor} that an open interval $I$ strictly inside the gap
contains $W \log n + O(1)$ eigenvalues of $T_n$, where $W=W(a, I)$ is an explicitly computable constant. The objective of this note is to study in more detail
the spectrum of the Toeplitz matrix for the piecewise constant symbol of the form
\begin{equation}\label{Eqa_x}
a(x)=\left\{
   \begin{array}{ll}
     0, & x\in[-\pi,L), \\
     1, & x\in[L,\pi),
   \end{array}
 \right.
\end{equation}
where $L\in [0, \pi)$. In \cite{Lev}, it was shown numerically that if
$L p = \pi q$ with some (co-prime) integer numbers $p, q$, then the spurious eigenvalues of the operator $T_n[a]$ are ``nearly periodic" in $n$ with a ``period" $\omega=\omega(p,q)$, see formula \eqref{eq:omega} below.
Specifically, let $\lambda^{(k)}\in (0, 1)$ be an eigenvalue of the matrix $T_k[a]$.
Then for each of the operators $T_{k+\omega l}[a]$, $l = 1, 2, \dots$, there exists an eigenvalue $\lambda^{(k+\omega l)}$ such that the difference $\lambda^{(k+\omega(l+1))} - \lambda^{(k+\omega l)}$ tends to zero as $l\to\infty$.
Although the convergence rate of the latter was not estimated in \cite{Lev}, graphs of the eigenvalues qualitatively showed a rate which is faster than the logarithmic filling rate of the gap. As an example, Fig. \ref{fig:Fig1} (see \cite{Lev}) gives a diagram of the numerically computed eigenvalues of $P_nAP_n$ vs. $n$, with $L=\frac{\pi}{2}$. The
symbols ``o" and ``x" mark the eigenvalues for even and odd $n$ respectively. The diagram clearly suggests periodicity with period $4$.

%%%%%%%%%%%%%%%%%%%%%%%%%%%%%%%%%%%
\begin{figure}[hbt!]
\begin{center}
\fbox{\includegraphics[width=0.9\textwidth]{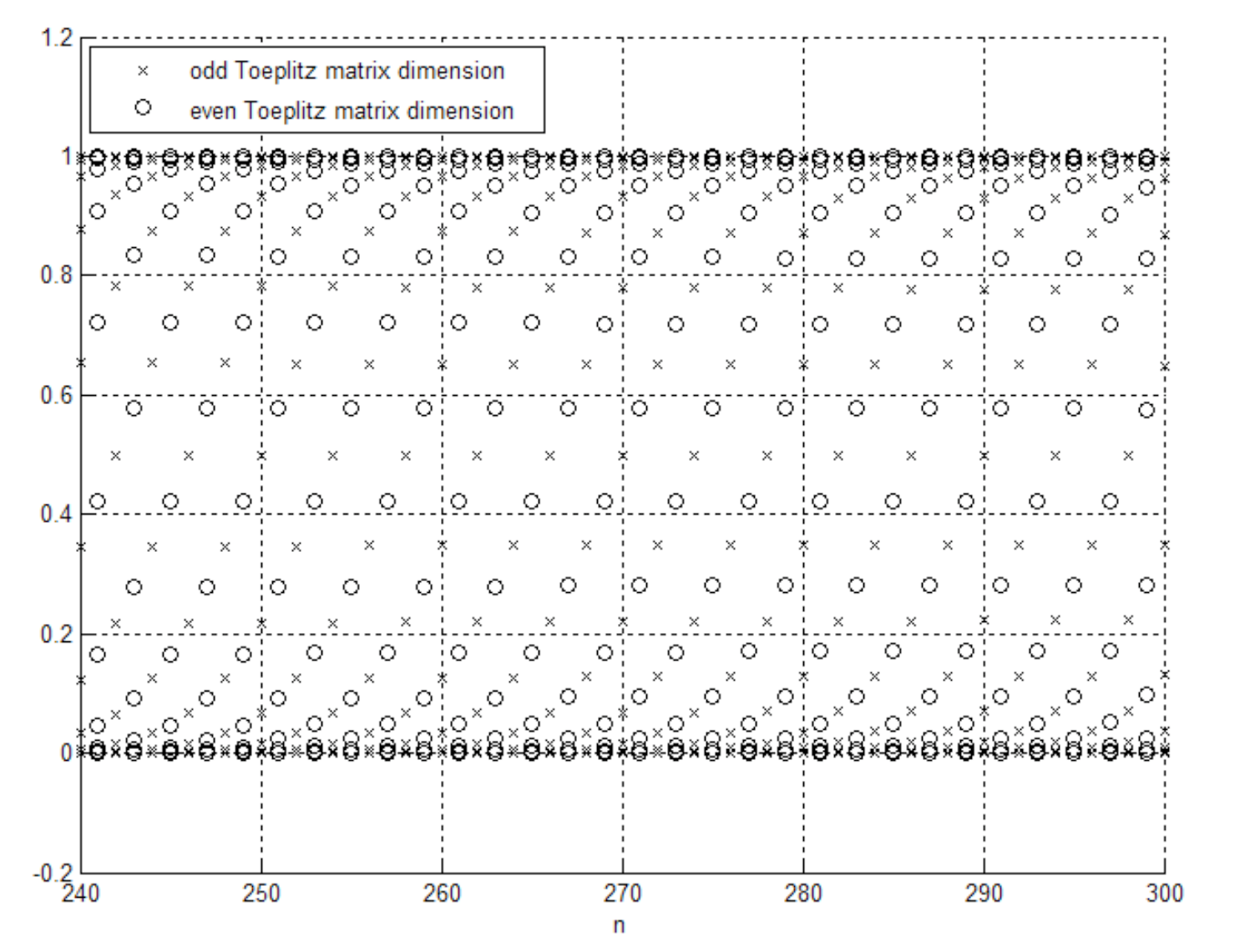}}
\end{center}
\caption{Eigenvalues of $T_n[a]$, with $L=\frac{\pi}{2}$, plotted vs. $n$.}\label{fig:Fig1}
\end{figure}
%%%%%%%%%%%%%%%%%%%%%%%%%%%%%%%%%%%%

\noindent
The paper \cite{Lev} (see also \cite{Sharg1} in this volume) offered no quantitative measurement of the properties
of the periodic behavior.
Moreover no rigorous proof has been given, see  \cite{Sharg1}  for some intuitive
discussion of the phenomenon.

In this article we address the question of the near periodicity of eigenvalues, but instead of the symbol \eqref{Eqa_x} it is more convenient to take $a$ of the form
\eqref{a:eq}.
We have not been able to find a proof of this effect for the operator
$T_n[a]$ itself, but we can show its presence for
the squared Toeplitz operator, i.e. for $M_n[a] = (T_n[a])^2$, see Theorem \ref{Th1}.
Numerical examples and more detailed conjectures with regard to the periodicity will be presented in a further publication.

\section{The main result} \label{SectConvergence}

%For  $n\in \mathbb N$, let $P_{[n]}$ denote the projection
%on the subspace of $L^2[-\pi,\pi]$ spanned by
%$\{e_j\}_{j=1}^{n}$, where $e_{j}=\frac{e^{ijx}}{\sqrt{2\pi}}$.
%When there is no danger of confusion, we
%omit the notation $[n]$.
%For any real valued function
%$a\in L^\infty(-\pi, \pi)$ we denote by $A$ the multiplication operator
%in $L^2(-\pi, \pi)$ given by $Af(x)=a(x)f(x), f\in L^2(-\pi, \pi)$.
%We denote $T_n[a] = P_n A P_n$.
%
We are concerned with the
spectrum of the squared Toeplitz operator
\[
M_n=M_n[a] = (T_n[a])^2.
\]

First, we introduce some consistent notation for eigenvalues of  various operators which appear later in the paper. For any self-adjoint matrix operator $S=S_n$ of size $n$, we shall denote by $\lambda_j(S)$, $j=1,\dots,n$, its eigenvalues labeled in the descending order, and by $\mu_k(S)$, $k=1,\dots,n$,  its eigenvalues labeled in the ascending order,
so that
\[
\mu_j(S)=\lambda_{n+1-j}(S)\,.
\]

Also, for brevity, in the particular case of operators $M_n$, we shall write
\[
\mu_j^{(n)} = \mu_j(M_n)\,,
\]

The next theorem is the main result of the paper:

\begin{theorem} \label{Th1}
Let $a(x)$ be the function
\begin{equation}\label{a:eq}
a(x) =
\begin{cases}
-1,\ x\in[-\pi, L),\\
1,\ x\in [L, \pi).
\end{cases}
\end{equation}
with some $L\in [0, \pi)$.
Suppose that
\begin{equation}\label{EqCondition1}
p L =  \pi q
\end{equation}
for some co-prime   $p\in\mathbb N$ and $q\in \mathbb Z$.
Define
\begin{equation*}\label{eq:omega}
\omega = \omega(p, q)
=
\begin{cases}
2, \ \ q = 0,\\
p, \ \ p\ \  \textup{and}\   q \  \textup{are odd},\\
2p, \ \ \textup{either} \  p\  \textup{or} \ q\  \textup{is even.}
\end{cases}
\end{equation*}
Let $\epsilon \in (0, 1)$ be a fixed number, and let $j$ be such that
$\mu^{(n)}_j< 1-\epsilon$.
Then for a sufficiently large number $K>0$ and $n\ge K \omega  \epsilon^{-1}$,
\begin{equation}\label{mu:eq}
|\mu^{(n)}_j - \mu^{(n+\omega)}_j|\le \frac{C\omega (1+\log^2n)}{\epsilon n},
\end{equation}
with a constant $C>0$ independent of $p, q, n, j, \epsilon$.
\end{theorem}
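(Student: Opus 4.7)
Since $a^2 \equiv 1$, we have $AP_nA = I - AQ_nA$, so $M_n = P_n - B_n$ with $B_n := P_nAQ_nAP_n \ge 0$. Consequently $\mu_j^{(n)} = 1 - \lambda_{n+1-j}(B_n)$; the hypothesis $\mu_j^{(n)} < 1-\epsilon$ becomes $\lambda_{n+1-j}(B_n) > \epsilon$, and \eqref{mu:eq} is equivalent to the analogous estimate for the eigenvalues of $B_n$ above level $\epsilon$. Splitting $Q_n = Q^- + Q_n^+$ into projections on frequencies $k \le 0$ and $k > n$, write $B_n = B^- + B_n^+$; the matrix entries $(B^-)_{k,l}$ are independent of $n$, and an elementary substitution yields the exact shift identity $(B_{n+\omega}^+)_{k+\omega,l+\omega} = (B_n^+)_{k,l}$ for all $k,l \in [1,n]$. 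Using $\hat a(k) = (e^{-ikL}-(-1)^k)/(i\pi k)$ and $pL=\pi q$, a case analysis of the definition of $\omega$ yields the multiplicative shift relation $\hat a(m+\omega) = (-1)^\omega \frac{m}{m+\omega} \hat a(m)$ for $m \ne 0, -\omega$, which in particular forces $\hat a(\omega m)=0$ for every $m \ne 0$.

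\textbf{Near-isometric comparison.} Let $n_1 = \lfloor n/2 \rfloor$ and define the isometric embedding $J: \mathbb{C}^n \to \mathbb{C}^{n+\omega}$ by $Je_k = e_k$ for $k \le n_1$ and $Je_k = (-1)^\omega e_{k+\omega}$ for $k > n_1$; the sign factor is chosen so that the multiplicative shift relation produces cancellation between the shifted entries of $B^-$. I aim to show $\|J^*B_{n+\omega}J - B_n\|_{\mathrm{op}} \le C\omega(1+\log^2 n)/n$ by bounding the three blocks of the difference separately. The low-low block is the restriction of $B_{n+\omega}^+ - B_n^+$ to small indices, a finite sum of $\omega$ entries of size $O(1/n^2)$, giving block norm $O(\omega/n)$ by Schur. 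The high-high block, thanks to the exact shift identity, collapses to $\sum_{s=0}^{\omega-1}\hat a(l+s)\hat a(-k-s)$, of the same order. The off-diagonal (low-high) blocks are the subtle step: the multiplicative shift relation produces per-entry cancellation of size $O(\omega \log n / n^2)$, after which a Schur-type estimate with harmonic summation yields operator norm $O(\omega \log^2 n/n)$.

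\textbf{Localization and conclusion.} The main obstacle I anticipate is the reverse direction in the min-max comparison. The embedding $J$ directly gives $\lambda_i(B_{n+\omega}) \ge \lambda_i(B_n) - C\omega(1+\log^2 n)/n$ via testing on $JV$, where $V$ is the top-$i$ eigenspace of $B_n$; for the opposite inequality one needs eigenvector localization, namely that any unit eigenvector $w$ of $B_{n+\omega}$ with eigenvalue $>\epsilon$ is essentially supported on $\mathrm{Im}\,J$, so that $JJ^*w$ is a near-isometric copy usable as a test vector for $B_n$. I plan to establish this through a resolvent estimate on the middle-gap projection $P_M = I - JJ^*$ (of rank $\omega$): from bounds on $\|P_M B_{n+\omega} P_M\|$ and the off-diagonal block $\|P_M B_{n+\omega}(I-P_M)\|$ (where the multiplicative shift relation again plays the essential role in cancelling the dominant terms), and from the identity $P_M w = (\lambda - P_M B_{n+\omega} P_M)^{-1} P_M B_{n+\omega} (I-P_M) w$, one deduces $\|P_M w\| = O(\omega(1+\log^2 n)/(n\epsilon))$. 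Under the hypothesis $n \ge K\omega\epsilon^{-1}$ this is small, and combining the near-equivalence with the localization via min-max in both directions yields $|\lambda_i(B_n) - \lambda_i(B_{n+\omega})| \le C\omega(1+\log^2 n)/(\epsilon n)$ for all $i$ with $\lambda_i(B_n) > \epsilon$. Inverting the relation $\mu_j^{(n)} = 1 - \lambda_{n+1-j}(B_n)$ finally produces \eqref{mu:eq}.
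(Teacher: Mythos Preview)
Your overall architecture is exactly the paper's: reduce to $B_n=P_nAQ_nAP_n$, build an $n\times n$ matrix from $B_{n+\omega}$ by excising the middle block of $\omega$ indices (your $J^*B_{n+\omega}J$ is the paper's $F_n$), compare it to $B_n$ using the $\omega$-periodicity of $e^{ik(L+\pi)}$, and then transfer eigenvalue information across the rank-$\omega$ gap. Your sign $(-1)^\omega$ in $J$ is a useful refinement that the paper does not display; it is precisely what makes the off-diagonal cancellation go through uniformly in the odd-$\omega$ case.

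The gap is in your localization step. You assert that $\|P_Mw\|=O\bigl(\omega(1+\log^2 n)/(n\epsilon)\bigr)$ and that ``the multiplicative shift relation again plays the essential role'' in bounding $\|P_MB_{n+\omega}(I-P_M)\|$. Neither is right. The block $P_MB_{n+\omega}(I-P_M)$ involves only entries of $B_{n+\omega}$, with no comparison to $B_n$, so the shift relation contributes nothing here; a direct Hilbert--Schmidt estimate gives only
\[
\|P_MB_{n+\omega}(I-P_M)\|\ \le\ C\sqrt{\omega(1+\log^2 n)/n},
\]
and hence $\|P_Mw\|\le C\epsilon^{-1}\sqrt{\omega(1+\log^2 n)/n}$, which is the square root of what you claim. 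Feeding this into min--max the naive way costs you a factor $\|P_Mw\|$, not $\|P_Mw\|^2$, and you miss \eqref{mu:eq} by a factor $\sqrt{n/\omega}$. If you are more careful and track that the Rayleigh-quotient error is actually $O(\|P_Mw\|^2)$, you recover the correct $n^{-1}$ rate but with $\epsilon^{-2}$ instead of $\epsilon^{-1}$, so you still do not get the theorem as stated.

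The paper avoids this by working at the level of quadratic forms and counting functions rather than individual eigenvectors. Writing $B_{n+\omega}=\Xi B_{n+\omega}\Xi+D_{n+\omega}$ with $\Xi=I-P_M$, one estimates
\[
|(D_{n+\omega}u,u)|\le(\delta+C\omega n^{-1})\|P_Mu\|^2+\frac{C\omega(1+\log^2 n)}{\delta n}\|\Xi u\|^2
\]
via $2ab\le\delta a^2+\delta^{-1}b^2$; taking $\delta=\lambda/2$ kills the $P_M$-contribution to $n_+(\lambda,\cdot)$ outright and leaves the \emph{square} of $\|P_MB_{n+\omega}\Xi\|$ acting on $\Xi u$, which is exactly the $\omega(1+\log^2 n)/(\epsilon n)$ you want, with a single power of $\epsilon^{-1}$. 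Replace your resolvent/localization paragraph by this form-level argument and the proof closes.
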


In general, by $C$ and $c$ we denote various
positive constants independent of $n, p, q$ and $\epsilon$, whose precise value is of no importance.

Note that under the condition \eqref{EqCondition1} we have
\begin{equation}\label{equiv:eq}
\omega (\pi + L) \equiv 0 \mod 2\pi.
\end{equation}

\begin{remark}
Theorem \ref{Th1} immediately implies that
under the condition $\mu^{(n)}_j< 1- \epsilon$, $\epsilon >0$,
for any fixed fixed $N=1, 2, \dots$, and all sufficiently large $n$,
\begin{equation}\label{mus:eq}
|\mu^{(n+\omega(m+1))}_j - \mu^{(n+\omega m)}_j|\le \frac{C \omega (1+\log^2n)}{\epsilon n},
\end{equation}
for all $m= 0, 1, \dots, N$, uniformly in $j$. This means that for
any $N$
the spectra of $M_n[a]$ have strings of length $N$  of ``nearly equal" eigenvalues.
\end{remark}

Throughout the proof of Theorem \ref{Th1}
we omit the subscript $n$ for brevity whenever possible.  Denote
$Q=Q_n = I-P_n$ and compute, remembering that $A^2 = I$:
\begin{equation*}
M_n[a] = (PAP)^2 = PAPAP = PA^2P - PAQAP = P-PAQAP.
\end{equation*}
This means that
\[
\mu^{(n)}_j = 1 - \lambda_j(B_n)\,,
\]
where $B_n = B_n[a] = PAQAP$.
Therefore it suffices to prove the inequality \eqref{mu:eq} for
the eigenvalues $\lambda_j(B_n)$
%
%condition on the eigenvalues of $B_n$
%
which satisfy $\lambda_j(B_n)> \epsilon$,
instead.

The entries of the matrix $B_n$ are easy to find:
\begin{equation}\label{Eqb}
b_{rl} = b^{(n)}_{rl}
= \frac{1}{2\pi} \sum_{\substack{m\le 0,\\ m\ge n+1}} a_{r-m} a_{m-l} ,\ r, l = 1, 2, \dots, n,
\end{equation}
where the Fourier coefficients
\[
a_k = \frac{1}{\sqrt{2\pi}}\int_{-\pi}^{\pi} a(x) e^{ikx} dx,
\]
are given by
\begin{equation}\label{fourier:eq}
a_k=\left\{
  \begin{array}{ll}
    -\sqrt{\dfrac{2}{\pi}} L, & k=0, \\[0.3cm]
    \sqrt{\dfrac{2}{\pi}}\dfrac{(-1)^k}{ik}\left[ 1-e^{ik(L+\pi)} \right], & k\neq 0.
  \end{array}
\right.
\end{equation}
The crucial point of our argument is that due to \eqref{equiv:eq} 
the exponential on the right-hand-side of \eqref{fourier:eq} is $\omega$-periodic 
as a function of $k$. This fact is used only once, in the proof of Lemma \ref{LemmaD}. 

We need to establish the following theorem.

\begin{theorem} \label{Th2}
Let $a(x)$ be as defined in \eqref{a:eq}, and suppose that
the condition \eqref{EqCondition1} is satisfied.
Let $\epsilon \in (0, 1)$ be a fixed number, and let $j$ be such that
$\lambda_j(B_n)> \epsilon$.
Then for all sufficiently large $n\ge 1$,
\begin{equation}\label{mu:eq1}
|\lambda_j(B_n) - \lambda_j(B_{n+\omega})|\le \frac{C\omega (1+\log^2n)}{\epsilon n},
\end{equation}
uniformly in $j$.
\end{theorem}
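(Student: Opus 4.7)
The plan is to compare $B_n$ and $B_{n+\omega}$ via two complementary isometric embeddings of $V_n := \mathrm{Ran}(P_n)$ into $V_{n+\omega}$: the natural inclusion $\iota_n$, and the shift $\Phi_n$ given by multiplication by $E_\omega := e^{-i\omega x}$, which sends $\phi_k$ to $\phi_{k+\omega}$ and therefore maps $V_n$ isometrically into $V_{n+\omega}$. Since $A$ commutes with $E_\omega$, a short computation using $\langle B_n u,u\rangle = \|Q_n Au\|^2$ yields the defect identities
\begin{align*}
\langle B_n u,u\rangle - \langle B_{n+\omega}\iota_n u,\iota_n u\rangle &= \sum_{k=n+1}^{n+\omega}|(\widehat{Au})_k|^2,\\
\langle B_n u,u\rangle - \langle B_{n+\omega}\Phi_n u,\Phi_n u\rangle &= \sum_{k=1-\omega}^{0}|(\widehat{Au})_k|^2,
\end{align*}
where $(\widehat{Au})_k$ denotes the $k$-th Fourier coefficient of $Au$. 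Both defects are nonnegative, so Cauchy interlacing immediately yields the easy direction $\lambda_{j+\omega}(B_{n+\omega}) \leq \lambda_j(B_n)$.

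The substance of the theorem is the opposite inequality. Using $|a_k|\leq C/|k|$ and Cauchy--Schwarz, for $u \in V_n$ supported in $\{1,\dots,M\}$ the first defect is bounded by $C\omega M(n-M)^{-2}\|u\|^2$, while for $u$ supported in $\{M+1,\dots,n\}$ the second is bounded by $C\omega M^{-1}\|u\|^2$. These control the diagonal contributions of head- and tail-localized pieces respectively. To match the cross-terms, the $\omega$-quasi-periodicity $a_{k+\omega} = \sigma\,(k/(k+\omega))\,a_k$ with $\sigma := (-1)^\omega$, which is a direct consequence of \eqref{equiv:eq}, is essential: for tail-localized $u^R$ and $k$ near $0$, the resulting identity $(\widehat{Au^R})_{k-\omega} = \sigma(\widehat{Au^R})_k + O(\omega\|u^R\|/n)$ allows $\langle B_{n+\omega}\,\iota_n u^L,\sigma\Phi_n u^R\rangle$ to approximate $\langle B_n u^L, u^R\rangle$ up to error $O(\omega\log n/n)$. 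This matching of cross-terms is presumably the content of the paper's Lemma~D, and is the unique place where the periodicity assumption \eqref{equiv:eq} enters.

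The min-max scheme is then as follows. For each $i \leq j$, decompose the $i$-th eigenvector $u_i$ of $B_n$ as $u_i = u_i^L + u_i^R + u_i^{\mathrm{mid}}$, with supports in $\{1,\dots,M\}$, $\{n-M+1,\dots,n\}$, and the middle respectively, for $M \sim \log n/\epsilon$, and form test vectors $\tilde u_i := \iota_n u_i^L + \sigma\Phi_n u_i^R \in V_{n+\omega}$. Combining the defect bounds, the cross-term estimate, and a quantitative bound on $\|u_i^{\mathrm{mid}}\|$, one obtains $\langle B_{n+\omega}\,\tilde u_i,\tilde u_i\rangle \geq \lambda_i - C\omega(1+\log^2 n)/(\epsilon n)$; the max-min principle applied to the $j$-dimensional span of $\{\tilde u_i\}$ then delivers the theorem in one direction, and the reverse is obtained by swapping the roles of $n$ and $n+\omega$. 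The hard step is the quantitative eigenvector localization: the eigenvectors of $B_n$ with $\lambda_i(B_n) > \epsilon$ must concentrate in the union of two end-zones of length $M$, with the middle mass $\|u_i^{\mathrm{mid}}\|$ controlled by a negative power of $M\epsilon$. This requires the off-diagonal decay of both Hankel-squared pieces of $B_n$ (corresponding to the sums over $m\leq 0$ and $m\geq n+1$ in \eqref{Eqb}) together with a resolvent-type argument, and is where most of the technical weight of the proof will lie.
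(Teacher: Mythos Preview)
Your approach is genuinely different from the paper's, and it leaves a real gap.

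The paper never localizes eigenvectors. Instead it works purely with the matrices: it shades the central ``cross'' of $\omega$ rows and $\omega$ columns in $B_{n+\omega}$ (rows and columns with index in $[k,k+\omega)$, $k=\lceil n/2\rceil$), calls the cross $D_{n+\omega}$, and calls the $n\times n$ matrix obtained by deleting the cross $F_n$. The proof then has two ingredients. First (Lemma~\ref{LemmaD}), the periodicity \eqref{equiv:eq} is used exactly once to show that $\|F_n-B_n\|\le C\omega(1+\log n)/n$; this is the paper's analogue of your cross-term matching, but done entry-by-entry in Hilbert--Schmidt norm, with no reference to eigenvectors. Second (Lemmas~\ref{LemmaNormC} and~\ref{subspace:lem}), the cross $D_{n+\omega}$ is estimated as a quadratic form: $|(D_{n+\omega}u,u)|\le (\delta+C\omega n^{-1})\|\Pi u\|^2 + C\omega\delta^{-1}n^{-1}(1+\log^2 n)\|\Xi u\|^2$, where $\Pi$ projects onto the $\omega$ middle coordinates and $\Xi$ onto their complement. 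Choosing $\delta=\lambda/2$ makes the $\Pi$-term harmless at level $\lambda>\epsilon$ (it has rank $\omega$ and norm below $\lambda$, so contributes nothing to $n_+(\lambda,\,\cdot\,)$), and the $\Xi$-term is the small error. The counting-function inequalities for $B_{n+\omega}$, $F_n$, and $B_n$ then chain together to give \eqref{mu:eq1}. This is essentially your two-embedding idea repackaged at the operator level: $F_n$ is unitarily equivalent to $\Xi B_{n+\omega}\Xi$, i.e.\ to the compression of $B_{n+\omega}$ that keeps the first $k-1$ basis vectors fixed and shifts the last $n-k+1$ by $\omega$, which is precisely your $\iota_n$ on the head and $\Phi_n$ on the tail. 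The difference is that the paper compares \emph{matrices} and so never needs to know where the eigenvectors live.

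Your scheme, by contrast, hinges on the claim that every eigenvector $u_i$ with $\lambda_i(B_n)>\epsilon$ has $\|u_i^{\mathrm{mid}}\|$ small for a middle block of length $n-2M$ with $M\sim \epsilon^{-1}\log n$. You flag this as ``the hard step'' and do not prove it. It is plausible (the entrywise bound \eqref{Eqbl_rEstimate} shows $B_n$ is small away from the two corners), but getting a quantitative decay good enough to feed into min--max is not automatic, and the resolvent-type argument you allude to is not supplied. The paper's route shows this entire step is unnecessary: the rank-$\omega$ nature of the cross plus the quadratic-form splitting do the job that your localization lemma would have done. Note also that your ``easy direction'' $\lambda_{j+\omega}(B_{n+\omega})\le\lambda_j(B_n)$ carries an index shift of $\omega$ and is not one of the two inequalities in \eqref{mu:eq1}; both directions in the paper come from the same counting-function sandwich, with no asymmetry between them.
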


First we estimate the entries $b^{(n)}_{rl}$.

\begin{lemma} Let $a(x)$ be defined by
\eqref{a:eq}, and let $b^{(n)}_{rl}$ be defined by \eqref{Eqb}.
Then for all $ 1\leq l\le r\le n$ we have:
\begin{equation}\label{upperb:eq}
|b^{(n)}_{rl}|\le \frac{16}{\pi^2}\frac{1+ \log n}{|l-r|},\ l\not = r,
\end{equation}
and
\begin{equation}\label{Eqbl_rEstimate}
|b^{(n)}_{rl}|\le \frac{8}{\pi^2}
\biggl(
\frac{1}{n+1-r} + \frac{1}{l} \biggr).
\end{equation}
\end{lemma}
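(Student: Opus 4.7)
The plan is to start from the explicit expression \eqref{Eqb} for $b^{(n)}_{rl}$ and the closed form \eqref{fourier:eq} for the Fourier coefficients. Since $|1-e^{ik(L+\pi)}|\le 2$, formula \eqref{fourier:eq} immediately yields the pointwise bound $|a_k|\le \frac{2}{|k|}\sqrt{2/\pi}$ for all $k\ne 0$. Observing that in the sum \eqref{Eqb} the index $m$ runs over $m\le 0$ and $m\ge n+1$, and that $1\le l\le r\le n$, both indices $r-m$ and $m-l$ are nonzero integers, so we may apply this Fourier bound throughout. I would split the sum as $b^{(n)}_{rl}=S_1+S_2$ where $S_1$ collects the terms with $m\le 0$ and $S_2$ the terms with $m\ge n+1$, leading to
\[
|S_1|\le \frac{4}{\pi^2}\sum_{m=0}^{\infty}\frac{1}{(r+m)(l+m)},\qquad |S_2|\le \frac{4}{\pi^2}\sum_{m=0}^{\infty}\frac{1}{(n+1-r+m)(n+1-l+m)}.
\]

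For the first bound \eqref{upperb:eq}, valid when $r\ne l$, I would evaluate each geometric-looking sum by partial fractions. Since $r>l$,
\[
\sum_{m=0}^{\infty}\frac{1}{(r+m)(l+m)}=\frac{1}{r-l}\sum_{k=l}^{r-1}\frac{1}{k}\le \frac{1+\log n}{r-l},
\]
using $\sum_{k=1}^{n}\frac{1}{k}\le 1+\log n$. An identical computation for $S_2$, with the roles of $l$ and $r$ replaced by $n+1-r$ and $n+1-l$ (again with difference $r-l$), produces the same upper bound. Summing gives $|b^{(n)}_{rl}|\le \frac{8(1+\log n)}{\pi^2|l-r|}$, which is even a factor of two better than \eqref{upperb:eq}.

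For the second bound \eqref{Eqbl_rEstimate}, I would avoid partial fractions and instead use monotonicity. Because $r\ge l$, one has $\frac{1}{(r+m)(l+m)}\le \frac{1}{(l+m)^2}$, hence
\[
\sum_{m=0}^{\infty}\frac{1}{(r+m)(l+m)}\le \sum_{m=0}^{\infty}\frac{1}{(l+m)^2}\le \frac{1}{l^2}+\int_{0}^{\infty}\frac{dx}{(l+x)^2}= \frac{1}{l^2}+\frac{1}{l}\le \frac{2}{l}.
\]
Analogously, using $n+1-l\ge n+1-r$, $\sum_{m=0}^{\infty}\frac{1}{(n+1-r+m)(n+1-l+m)}\le \frac{2}{n+1-r}$. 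Combining the two estimates immediately yields $|b^{(n)}_{rl}|\le \frac{8}{\pi^2}\bigl(\frac{1}{l}+\frac{1}{n+1-r}\bigr)$, which is \eqref{Eqbl_rEstimate}.

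There is no real obstacle here; the only mildly delicate point is keeping track of the constants so as to arrive at exactly $16/\pi^2$ and $8/\pi^2$. Both inequalities rely solely on the decay $|a_k|=O(1/|k|)$; the additional periodicity structure encoded in \eqref{equiv:eq} is not needed at this step, consistent with the remark in the paper that the $\omega$-periodicity is used only later, in the proof of Lemma \ref{LemmaD}.
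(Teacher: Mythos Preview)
Your proposal is correct and follows essentially the same strategy as the paper: bound $|a_k|\le 2\sqrt{2/\pi}\,|k|^{-1}$ and then estimate the resulting sums $\sum_{m}\frac{1}{|r-m|\,|m-l|}$. The only difference is in the bookkeeping for these sums: the paper peels off the boundary terms $m=0,\,n+1$, compares the remaining tails with integrals, and then treats the case $r=l$ separately, whereas your exact telescoping $\sum_{m\ge0}\frac{1}{(r+m)(l+m)}=\frac{1}{r-l}\sum_{k=l}^{r-1}\frac{1}{k}$ and the monotone comparison $\frac{1}{(r+m)(l+m)}\le\frac{1}{(l+m)^2}$ handle both bounds uniformly and even save a factor of $2$ in \eqref{upperb:eq}.
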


\begin{proof}
Substituting \eqref{fourier:eq} in \eqref{Eqb} we get
\begin{equation}\label{Eqbl_r}
b_{rl} = -\frac{(-1)^{r-l}}{\pi^2}\sum_{\substack{m\le 0,\\ m\ge n+1}}
\frac{1}{(r-m)(m-l)}\left(1+e^{i(r-l)(L+\pi)}-e^{i(r-m)(L+\pi)}-e^{i(m-l)(L+\pi)}\right).
\end{equation}
Therefore
\begin{align*}
|b_{rl}|\leq &\  \frac{4}{\pi^2}\sum_{\substack{m\le 0,\\ m\ge n+1}}
\left|\frac{1}{(r-m)(m-l)}\right|\\[0.2cm]
= &\ \frac{4}{\pi^2}\biggl(
\frac{1}{|r-(n+1)|\ |(n+1)-l|} + \frac{1}{r l}\biggr)
+ \frac{4}{\pi^2}\sum_{\substack{m\le -1,\\ m\ge n+2}}\frac{1}{|r-m|\ |m-l|}.
\end{align*}
The second term can be easily estimated by an appropriate integral.
If $r>l$, then
\begin{align}
\sum_{\substack{m\le -1,\\ m\ge n+2}}
\frac{1}{|r-m|\ |m-l|}\leq &\
\int_{-\infty}^0 \frac{1}{(r-x)(l-x)}dx+\int_{n+1}^\infty \frac{1}{(x-r)(x-l)}dx \notag\\[0.2cm]
= &\
\frac{1}{(r-l)}\biggl( \log\left(\frac{n+1-l}{n+1-r}
\right)+\log\left(\frac{r}{l}\right)\biggr).\label{ln:eq}
\end{align}
%
%no indentation
%
The bound \eqref{upperb:eq} follows immediately.

Let us derive \eqref{Eqbl_rEstimate}.
In view of the straightforward bounds
\begin{equation*}
\log\left(\frac{n+1-l}{n+1-r}\right)
\leq \frac{r-l}{n+1-r},\qquad \log \bigl(\frac{r}{l}\bigr)\le \frac{r-l}{l},
\end{equation*}
the right hand side of \eqref{ln:eq} does not exceed
\[
\frac{1}{n+1-r} + \frac{1}{l}.
\]
For the case $r=l$, a similar estimate can be obtained:
\begin{align*}
\sum_{\substack{m\le -1,\\ m\ge n+2}} \frac{1}{|r-m|^2}
\leq &\ \int_{n+1}^\infty \frac{1}{(x-r)^2}dx +
\int_{-\infty}^0 \frac{1}{(r-x)^2}dx\\[0.2cm]
= &\ \frac{1}{n+1-r} + \frac{1}{r}.
\end{align*}
Hence for all $r\ge l$ we have:
$$
|b_{rl}|
\leq \frac{8}{\pi^2}
\biggl(
\frac{1}{n+1-r} + \frac{1}{l}
\biggr),
$$
which coincides with \eqref{Eqbl_rEstimate}.
\end{proof}

Let $\omega\in \mathbb N$, $1< \omega<n$, and let
\begin{equation*}
k = k_n=\left\lceil\frac{n}{2}\right\rceil=
\begin{cases}
\dfrac{n}{2}, \quad n  \text{ even}, \\[0.2cm]
\dfrac{n+1}{2}, \quad n  \text{ odd}.
\end{cases}
\end{equation*}
%
%no indentation
%
Let us construct, using matrix $B_{n+\omega}$,  two new auxiliary matrices.
The $(n+\omega)\times(n+\omega)$-matrix $D_{n+\omega}$
has the entries
\begin{equation}\label{Eqc}
d^{(n+\omega)}_{rl}=\left\{
      \begin{array}{ll}
        b^{(n+\omega)}_{rl} & k\leq r< k+\omega,\ \
        \textup{or}\ \ \ k\leq l< k+\omega,\\
        0 & \hbox{otherwise.}
      \end{array}
    \right.
\end{equation}
%
%no indentation
%
The $n\times n$ matrix $F_n$ has the entries
\begin{equation} \label{EqD}
f^{(n)}_{rl}=
\left\{
\begin{array}{ll}
b^{(n+\omega)}_{r,l} & 1\leq r\le k-1, 1\leq l\le k-1,\\[0.2cm]
b^{(n+\omega)}_{r+\omega,l+\omega} & k\leq r\le n, k\leq l \le n, \\[0.2cm]
b^{(n+\omega)}_{r,l+\omega} & 1\leq r\le k-1, k\leq l \le n,\\[0.2cm]
b^{(n+\omega)}_{r+\omega,l} & k\leq r\le n,  1\leq l \le k-1.
\end{array}
\right.
\end{equation}
The method of constructing the matrices $D_{n+\omega}$ and $F_n$
from $B_{n+\omega}$ is illustrated by Figure~\ref{fig:Fig2}. First, we shade the central ``cross" of $\omega$ rows and $\omega$ columns in $B_{n+\omega}$,
starting with row and column number $k$.  The matrix $D_{n+\omega}$ is constructed by replacing everything outside the cross by zeros, and the matrix $F_n$ by ``removing'' the cross, and pulling the remaining four blocks together.

%%%%%%%%%%%%%%%%%%%%%%%%%%%%%%%%%%%
\begin{figure}[hbt!]
\begin{center}
\includegraphics{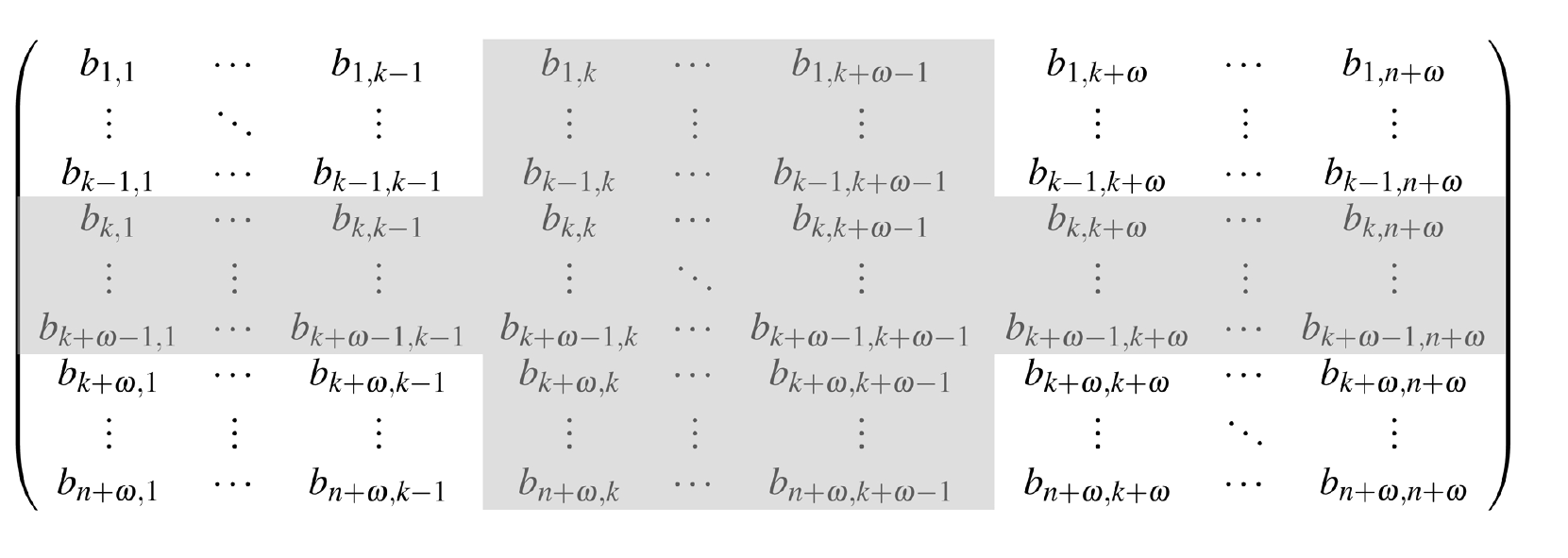}
\end{center}
\caption{Matrix $B_{n+\omega}$ and the central ``cross'' used in the construction of matrices $D_{n+\omega}$ and $F_n$}\label{fig:Fig2}
\end{figure}
%%%%%%%%%%%%%%%%%%%%%%%%%%%%%%%%%%%%

Introduce the projections  $\Pi_{n,\omega} = P_{k+\omega-1} - P_{k-1}$, and
$\Xi_{n,\omega} = P_{n+\omega}(I - \Pi_{n,\omega})$.
In terms of these projections, the operator $D^{(n+\omega)}$ can be represented as follows:
\[
D_{n+\omega} = \Pi_{n,\omega} B_{n+\omega} \Pi_{n,\omega}
+ \Pi_{n,\omega} B_{n+\omega} \Xi_{n,\omega}
+ \Xi_{n,\omega} B_{n+\omega} \Pi_{n,\omega},
\]
%
%
%formula for $B_{n+p}$ which will be reminded later
%
so that
\[
B_{n+\omega} = \Xi_{n,\omega} B_{n+\omega}\Xi_{n,\omega} + D_{n+\omega}.
\]
The next lemma estimates the matrix $D_{n+p}$ in terms of these projections.

\begin{lemma}\label{LemmaNormC}
Let $b^{(n)}_{rl}$
 be  defined as in
\eqref{Eqb}, and $d^{(n+\omega)}_{rl}$ as in \eqref{Eqc}.
Then for any $\epsilon >0$ and all $n \ge p$ we have
\begin{equation}\label{upperd:eq}
|(D_{n+p} u, u)|\le (\epsilon + C p n^{-1})\| \Pi_{n,p} u\|^2
+ \frac{C p}{\epsilon} \frac{1+\log^2 n}{n}\|\Xi_{n,p} u\|^2,
\end{equation}
for all $u\in L^2(-\pi, \pi)$.
\end{lemma}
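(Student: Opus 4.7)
The plan is to decompose $D_{n+p} = \Pi B \Pi + \Pi B \Xi + \Xi B \Pi$, where I abbreviate $\Pi = \Pi_{n,p}$, $\Xi = \Xi_{n,p}$, $B = B_{n+p}$, and to estimate the diagonal and the off-diagonal parts separately. Self-adjointness of $B$ and the orthogonality $\Pi\Xi = 0$ yield
\[
(D_{n+p} u, u) = (B\Pi u, \Pi u) + 2\operatorname{Re}(B\Xi u, \Pi u),
\]
and Young's inequality applied to the cross term, together with $\|\Pi B \Xi u\| \leq \|\Pi B \Xi\|\,\|\Xi u\|$, reduces the lemma to the two operator estimates
\[
\|\Pi B \Pi\| \leq \frac{Cp}{n}, \qquad \|\Pi B \Xi\|^2 \leq \frac{Cp(1+\log^2 n)}{n}.
\]

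The first estimate follows directly from \eqref{Eqbl_rEstimate} applied to $B_{n+p}$. For any $r, l \in [k, k+p-1]$, both $\min(r, l) \geq k$ and $n+p+1 - \max(r, l) \geq n - k + 2$ are of order $n$ (using the symmetry $b_{rl} = b_{lr}$, if necessary, to match the hypothesis of the bound), so $|b^{(n+p)}_{rl}| \leq C/n$ throughout this $p \times p$ block. Its Hilbert--Schmidt norm is therefore at most $Cp/n$, and in particular so is the operator norm.

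For the second estimate I would use $\|\Pi B \Xi\|^2 \leq \|\Pi B \Xi\|_{HS}^2 = \sum_{r \in [k, k+p-1]} \sum_{l \notin [k, k+p-1]} |b_{rl}|^2$ and split the inner sum according to the location of $l$. For indices $l$ lying at distance $\geq n/4$ from $k$, the logarithmic bound \eqref{upperb:eq} gives $|b_{rl}| \leq C(1+\log n)/|r - l| \leq C(1+\log n)/n$; as there are at most $O(n)$ such indices, their contribution to $\sum_l |b_{rl}|^2$ is $O((1+\log^2 n)/n)$. For the remaining $l$ in the strip of width $O(n)$ adjacent to the cross, one of $\min(r, l)$ or $n+p+1 - \max(r, l)$ is already of order $n$, so \eqref{Eqbl_rEstimate} yields $|b_{rl}|^2 \leq C/n^2$, and summation over the $O(n)$ such indices again contributes $O(1/n)$. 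Summing over the $p$ values of $r$ produces $\|\Pi B \Xi\|_{HS}^2 \leq Cp(1+\log^2 n)/n$, as required.

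The main technical point is that neither entry bound alone suffices: \eqref{upperb:eq} is useless for $l$ close to the cross (where $|r-l|$ can be as small as $1$), while \eqref{Eqbl_rEstimate} is weak for $l$ near the ends of $[1, n+p]$. The decisive factor $1/n$ in the Hilbert--Schmidt estimate comes from combining the two bounds according to the regime, exploiting that the cross sits at the central index $k \sim n/2$, so that every entry accessed in either term of the splitting is automatically small.
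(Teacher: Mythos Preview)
Your proposal is correct and follows essentially the same route as the paper: the same decomposition $D = \Pi B \Pi + \Pi B \Xi + \Xi B \Pi$, the same Young-inequality treatment of the cross term, and the same Hilbert--Schmidt estimates for $\Pi B \Pi$ and $\Pi B \Xi$ with the inner sum split according to whether $l$ is far from or near the cross (the paper cuts at $k/2$ rather than at distance $n/4$, but this is cosmetic). One small imprecision: for the near-cross indices you write that ``one of $\min(r,l)$ or $n+p+1-\max(r,l)$ is of order $n$'', but \eqref{Eqbl_rEstimate} is a sum, so you actually need (and have) \emph{both} to be of order $n$; this holds because $r$ sits in the central window and $l$ is within $n/4$ of it.
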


\begin{proof}  Write the straightforward estimate:
\begin{align}
|(D_{n+\omega} u, u)| \le &\ |(\Pi_{n,\omega} B_{n+\omega}\Pi_{n,\omega} u,
\Pi_{n,\omega} u)|
+ 2|(\Pi_{n,\omega} B_{n+\omega} \Xi_{n,\omega} u, \Pi_{n,\omega} u)|\notag\\[0.2cm]
\le &\ \|\Pi_{n,\omega} B_{n+\omega}\Pi_{n,\omega}\| \ \|\Pi_{n,\omega} u\|^2
+ 2\|\Pi_{n,\omega} B_{n+\omega} \Xi_{n,\omega} u\| \ \|\Pi_{n,\omega} u\|\notag\\[0.2cm]
\le &\ (\|\Pi_{n,\omega} B_{n+\omega}\Pi_{n,\omega}\| + \epsilon) \|\Pi_{n,\omega} u\|^2
+ \epsilon^{-1}\|\Pi_{n,\omega}B_{n+\omega}\Xi_{n,\omega}\|^2 \|\Xi_{n,\omega} u\|^2.
\label{d:eq}
\end{align}
Here we have used the elementary estimate $2ab\le \epsilon a^2 + \epsilon^{-1} b^2$,
$\epsilon, a, b >0$.

Let us estimate the matrix norms entering the above inequality.
For the first term we estimate the Hilbert-Schmidt
norm using the bound \eqref{Eqbl_rEstimate}:
\begin{equation}\label{pi:eq}
\|\Pi_{n,\omega} B_{n+\omega}\Pi_{n,\omega}\|_{\mathfrak S_2}^2\le
2\frac{16^2}{\pi^2}\sum_{r = k}^{k+\omega-1}
\sum_{l=k}^r \biggl(\frac{1}{n+\omega+1-r} + \frac{1}{l}\biggr)^2
\le C \frac{\omega^2}{k^2}.
\end{equation}
For the matrix
$\|\Pi_{n,\omega}B_{n+\omega}\Xi_{n,\omega}\|$
we also estimate its Hilbert-Schmidt norm, but
now we need
%
%removed "to use"
%
both \eqref{upperb:eq} and \eqref{Eqbl_rEstimate}:
\begin{equation}\label{split:eq}
\|\Pi_{n,\omega} B_{n+\omega}\Xi_{n,\omega}\|_{\mathfrak S_2}^2\le \biggl(
\sum_{r = k}^{k+\omega-1} \sum_{l=1}^{k-1} + \sum_{r=k+\omega}^{n+\omega} \sum_{l=k}^{k+\omega-1}\biggr)
|b^{(n+\omega)}_{rl}|^2.
\end{equation}
Let us estimate the first sum, which we denote $S_1$. Split it into two parts:
$1\le l \le k/2$ and $k/2 < l\le k-1$. For the first part we use
the estimate \eqref{upperb:eq}, so that
\[
|b^{(n+\omega)}_{rl}|\le C\frac{1+\log n}{n},
\]
and
\begin{equation*}
\sum_{r = k}^{k+\omega-1} \sum_{l\le k/2}
|b^{(n+\omega)}_{rl}|^2\le C\frac{(1+\log n)^2}{n^2} \sum_{r = k}^{k+\omega-1}
\sum_{l=1}^{k/2} 1
\le C \omega \frac{(1+\log n)^2}{n}.
\end{equation*}
The part with $k/2<l\le k-1$ is estimated with the help of \eqref{Eqbl_rEstimate}, so that
\[
|b^{(n+\omega)}_{rl}|\le \frac{C}{n},
\]
and
\begin{equation*}
\sum_{r = k}^{k+\omega-1} \sum_{k/2<l\le k-1}
|b^{(n+\omega)}_{rl}|^2 \le C\frac{\omega}{n}.
\end{equation*}
Thus
\[
S_1 \le C \omega \frac{(1+\log n)^2}{n}.
\]
The same bound holds for the second sum on the right hand side of \eqref{split:eq}.
Together with \eqref{pi:eq} and \eqref{d:eq} these bounds lead to the claimed estimate
\eqref{upperd:eq}.
\end{proof}

\begin{lemma} \label{LemmaD}
Suppose that \eqref{EqCondition1} is satisfied.
Then for all $n\ge 1$ we have
\begin{equation}
\|F_n-B_n\|\leq \frac{C\omega(1+\log n)}{n}.
\end{equation}
\end{lemma}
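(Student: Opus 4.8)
The plan is to derive the operator-norm bound from a uniform entrywise estimate and then sum in Hilbert--Schmidt norm: if $|(F_n-B_n)_{rl}|\le C\omega(1+\log n)\,n^{-2}$ for all $r,l$, then, $F_n-B_n$ being $n\times n$,
\[
\|F_n-B_n\|\le\|F_n-B_n\|_{\mathfrak S_2}\le C\omega(1+\log n)\,n^{-1},
\]
which is the asserted bound. (For $n\le\omega$ the inequality is trivial, since $\|F_n\|\le\|B_{n+\omega}\|\le1$ and $\|B_n\|\le1$ while the right-hand side is $\ge C$; so assume $n>\omega$.) The only structural input is formula \eqref{Eqbl_r} together with the $\omega$-periodicity in $k$ of $e^{ik(L+\pi)}$ recorded after \eqref{fourier:eq}. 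Writing $\Psi_{rlm}=1+e^{i(r-l)(L+\pi)}-e^{i(r-m)(L+\pi)}-e^{i(m-l)(L+\pi)}$ for the bracket in \eqref{Eqbl_r}, this periodicity leaves $\Psi_{rlm}$ unchanged whenever one of $r,l,m$ is shifted by a multiple of $\omega$, and $|\Psi_{rlm}|\le4$.

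Take first the two diagonal blocks of \eqref{EqD}, where $f^{(n)}_{rl}$ is $b^{(n+\omega)}_{r,l}$ (for $r,l\le k-1$) or $b^{(n+\omega)}_{r+\omega,l+\omega}$ (for $r,l\ge k$). Writing this and $b^{(n)}_{rl}$ via \eqref{Eqbl_r}, aligning the two sums by the change of variable $m\mapsto m-\omega$ in the lower-right block, and invoking the $\Psi$-invariance, one finds that $f^{(n)}_{rl}-b^{(n)}_{rl}$ collapses to a sum of exactly $\omega$ terms $\pm\pi^{-2}\bigl((r-m)(m-l)\bigr)^{-1}\Psi_{rlm}$, with $m$ running over $\{1-\omega,\dots,0\}$ in the lower-right block and over $\{n+1,\dots,n+\omega\}$ in the upper-left one. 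On these blocks $|r-m|\ge k$ and $|l-m|\ge k$ with $k\ge n/2$, so each term is $O(n^{-2})$ and $|f^{(n)}_{rl}-b^{(n)}_{rl}|\le C\omega\,n^{-2}$.

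The off-diagonal blocks are the crux. Here $f^{(n)}_{rl}$ is $b^{(n+\omega)}_{r,l+\omega}$ or $b^{(n+\omega)}_{r+\omega,l}$; the same manipulation, in which the substitution $m\mapsto m-\omega$ in the tail $m\ge n+\omega+1$ now moves one index only, yields
\[
\bigl|f^{(n)}_{rl}-(-1)^{\omega}b^{(n)}_{rl}\bigr|\ \le\ \frac{\omega}{\pi^{2}}\sum_{m\le 0}\frac{|\Psi_{rlm}|}{|r-m|\,|m-l|\,|m-l-\omega|}+\frac{\omega}{\pi^{2}}\sum_{m\ge n+1}\frac{|\Psi_{rlm}|}{|m-l|\,|r-m|\,|r-m-\omega|},
\]
the outer $\omega$ issuing from $\tfrac1{m-l-\omega}-\tfrac1{m-l}$ and $\tfrac1{r-m-\omega}-\tfrac1{r-m}$. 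The sign $(-1)^{\omega}$ is $+1$ when $q=0$ or when $p$ or $q$ is even, and is $-1$ only when $\omega=p$ with $p,q$ odd; in that remaining case it is constant on each off-diagonal block and is removed once and for all by conjugating $B_n$ with the fixed diagonal $\pm1$ matrix carrying $-1$ in its last $n-k+1$ entries, a unitary which alters neither the norm estimate nor the eigenvalues used in Theorem~\ref{Th2}. It remains to bound the two displayed sums by $C(1+\log n)\,n^{-2}$. For the first, $|r-m|\ge1$ and $|m-l-\omega|\ge|m-l|=l-m$, so it is at most $4\sum_{i\ge0}\bigl((r+i)(l+i)^{2}\bigr)^{-1}$; splitting the sum at $i=l$ bounds it by $l^{-2}\sum_{i=0}^{l}(r+i)^{-1}+\sum_{i>l}i^{-3}\le C(1+\log n)\,n^{-2}$, since $l\ge k\ge n/2$. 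For the second, $r\le k-1\le n/2$ forces $|r-m|=m-r\ge m/2$ for $m\ge n+1$, so it is at most $16\sum_{m\ge n+1}\bigl((m-l)m^{2}\bigr)^{-1}$; the partial-fraction identity $\frac1{(m-l)m^{2}}=l^{-2}\bigl(\tfrac1{m-l}-\tfrac1m\bigr)-l^{-1}m^{-2}$, the telescoping evaluation $\sum_{m\ge n+1}\bigl(\tfrac1{m-l}-\tfrac1m\bigr)=\sum_{j=n+1-l}^{n}\tfrac1j\le1+\log n$, and $l^{-1}\le2n^{-1}$, $l^{-2}\le4n^{-2}$ then give $C(1+\log n)\,n^{-2}$.

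Combining the two cases gives $|(F_n-B_n)_{rl}|\le C\omega(1+\log n)\,n^{-2}$ throughout, and the Hilbert--Schmidt estimate of the first paragraph completes the proof. I expect the off-diagonal estimate to be the main obstacle: one needs the full $n^{-2}$ smallness of each entry (a single factor $n^{-1}$ would not survive summation over $\sim n^{2}$ entries), which rules out the crude bound $|\Psi_{rlm}|\le4$ on its own and requires exploiting both the third factor $(r-m-\omega)^{-1}$ and the cancellation in $\sum_{m}\bigl(\tfrac1{m-l}-\tfrac1m\bigr)$, the tails $\sum_{m\ge n+1}$ being only conditionally summable at the level $(r-m)^{-1}(m-l)^{-1}$.
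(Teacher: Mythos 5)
Your proposal follows the same route as the paper's own proof: entrywise estimates on the four blocks of \eqref{EqD}, then a Hilbert--Schmidt bound, with the same $O(\omega n^{-2})$ rate on the diagonal blocks and $O(\omega(1+\log n)n^{-2})$ on the off-diagonal ones. Your treatment of the diagonal blocks (after the change of variable the difference collapses \emph{exactly} to the $\omega$ boundary terms $m\in\{n+1,\dots,n+\omega\}$, resp.\ $m\in\{1-\omega,\dots,0\}$, each of size $O(n^{-2})$) is a slight streamlining of the paper's termwise comparison, and your two tail summations in the off-diagonal case (the split at $i=l$, and the partial-fraction/telescoping bound for $\sum_{m\ge n+1}((m-l)m^2)^{-1}$) are correct and reproduce the paper's bounds for $s_{rl}$ and $q_{rl}$.

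The substantive divergence is your sign factor $(-1)^{\omega}$, and this is a genuine catch rather than a cosmetic variant. In \eqref{fourier:eq} only the exponential $e^{ik(L+\pi)}$ is $\omega$-periodic by \eqref{equiv:eq}; the prefactor $(-1)^k$ is $\omega$-periodic only for even $\omega$. In the diagonal blocks both Fourier indices are shifted, so the parities cancel for every $\omega$; in the off-diagonal blocks only one index is shifted, and the paper's Case 3 bound $|a_{r-m}(a_{m-l-\omega}-a_{m-l})|\le \tfrac{8\omega}{\pi}(r-m)^{-1}(m-l)^{-2}$ (``arguing as in the previous case'') tacitly uses periodicity of the full numerator $(-1)^k\bigl[1-e^{ik(L+\pi)}\bigr]$. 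When $p$ and $q$ are both odd, so that $\omega=p$ is odd, one has instead $a_{j+\omega}=-\tfrac{j}{j+\omega}\,a_j$, hence $a_{m-l-\omega}-a_{m-l}\approx -2a_{m-l}$ is of order $|m-l|^{-1}$, not $\omega|m-l|^{-2}$, and the paper's estimate fails there; what your computation shows is $f^{(n)}_{rl}\approx -b^{(n)}_{rl}$ on those blocks, i.e.\ $F_n\approx \Sigma B_n\Sigma$ with $\Sigma$ your diagonal sign matrix. Be aware, though, of what your fix does and does not give: conjugating by $\Sigma$ proves $\|F_n-\Sigma B_n\Sigma\|\le C\omega(1+\log n)/n$, which is indeed all that Theorem \ref{Th2} needs (the counting functions $n_+$ of $B_n$ and $\Sigma B_n\Sigma$ coincide), but it is not the lemma as literally stated: for odd $\omega$ the stated inequality would force the block of $B_n$ with rows $<k$ and columns $\ge k$ to have norm $O(\omega\log n/n)$, and nothing in your argument (or in the paper) controls that block, nor is it expected to be small. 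So in the even-$\omega$ cases ($q=0$, or $p$ or $q$ even) your proof and the paper's coincide and establish the lemma as stated; in the odd case you have proved a corrected, unitarily equivalent version and, in passing, exposed a parity gap in the paper's own Case 3.
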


\begin{proof}
First we estimate the difference $f^{(n)}_{rl} - b^{(n)}_{rl}$.
For convenience we re-write the formula \eqref{Eqb} for $b^{(n+\omega)}_{rl}$:
\begin{align}
b^{(n+\omega)}_{rl} = &\ \frac{1}{2\pi}
\sum_{\substack{m\le 0,\\ m\ge n+\omega+1}} a_{r-m} a_{m-l}\notag\\[0.2cm]
= &\ \frac{1}{2\pi} \sum_{m\le 0}  a_{r-m} a_{m-l}
+ \frac{1}{2\pi} \sum_{m\ge n+1} a_{r-m-\omega} a_{m+\omega-l}.\label{bnp:eq}
\end{align}
Our plan is to estimate the Hilbert-Schmidt norm of $F^{(n)}-B^{(n)}$. So,
we estimate carefully the sum of squares $|f_{rl}  - b_{rl}|^2$ over the four ranges of
$r$ and $l$, specified in \eqref{EqD}.

{\bfseries Case 1:
$1\le r\le k-1$, $1\le l\le k-1$ (upper-left block in Figure~\ref{fig:Fig2}).}
According to \eqref{bnp:eq} we have
\[
f^{(n)}_{rl} - b^{(n)}_{rl} =
\frac{1}{2\pi} \sum_{m\ge n+1} \bigl(a_{r-m-\omega} a_{m+\omega-l}
- a_{r-m} a_{m-l}\bigr).
\]
Using \eqref{fourier:eq} and \eqref{equiv:eq} we get
the bound
\begin{align*}
|a_{r-m-p} &\ a_{m+p-l}
- a_{r-m} a_{m-l}| \\
= &\ \frac{2}{\pi}  |(1-e^{i(r-m)(L+\pi)}) (1-e^{i(m-l)(L+\pi)})|
\biggl|\frac{1}{(r-m-\omega)(m+\omega-l)} - \frac{1}{(r-m)(m-l)}\biggr|\\[0.2cm]
\le &\ \frac{8 \omega}{\pi} \biggl[
\frac{1}{(r-m)^2(m-l)} + \frac{1}{(m-r)(m-l)^2}
\biggr].
\end{align*}
Therefore, we can estimate:
\begin{align*}
|f^{(n)}_{rl} - b^{(n)}_{lr}|
\le &\ \frac{4\omega}{\pi^2} \sum_{m\ge n+1} \biggl[
\frac{1}{(r-m)^2(m-l)} + \frac{1}{(m-r)(m-l)^2}
\biggr]\\[0.2cm]
\le &\
\frac{C\omega}{(n+1 - r)(n+1-l)}\le \frac{C\omega}{n^2},\ 1\le r, l\le k-1,
\end{align*}
and hence
\begin{equation}\label{case1:eq}
\sum_{l, r=1}^{k-1}|f^{(n)}_{rl} - b^{(n)}_{rl}|^2 \le \frac{C\omega^2}{n^2}.
\end{equation}

{\bfseries Case 2: $k\le r \le n$, $k \le l\le n$ (lower-right block in Figure~\ref{fig:Fig2}).}
Using again \eqref{bnp:eq}, from \eqref{EqD} we get:
\[
f^{(n)}_{rl} - b^{(n)}_{rl} =
\frac{1}{2\pi} \sum_{m\le 0} \bigl(a_{r-m+\omega} a_{m-l-\omega}
- a_{r-m} a_{m-l}\bigr).
\]
Arguing as in Case 1, we obtain the same bound:
\begin{equation}\label{case2:eq}
\sum_{l, r=k}^{n}|f^{(n)}_{rl} - b^{(n)}_{rl}|^2 \le \frac{C\omega^2}{n^2}.
\end{equation}

The remaining two cases are trickier:

{\bfseries Case 3: $1\le r \le k-1$, $k \le l\le n$
(upper-right block in Figure~\ref{fig:Fig2}).}
By \eqref{EqD},
\begin{align*}
f^{(n)}_{rl} - b^{(n)}_{lr}
= &\ b^{(n+\omega)}_{r, l+\omega} - b^{(n)}_{lr}\\
= &\ \frac{1}{2\pi}\sum_{m\le 0} \bigl( a_{r-m} a_{m-l-\omega} - a_{r-m} a_{m-l}\bigr) \\
&\ + \frac{1}{2\pi} \sum_{m\ge n+1}\bigl( a_{r-m-\omega} a_{m-l} - a_{r-m} a_{m-l}\bigr).
\end{align*}
Arguing as in the previous case, we obtain
\begin{align*}
| a_{r-m} a_{m-l-\omega} - a_{r-m} a_{m-l}|
\le &\  \frac{8 \omega}{\pi}
\frac{1}{(r-m)(m-l)^2},\ m\le 0,\\[0.2cm]
| a_{r-m-\omega} a_{m-l} - a_{r-m} a_{m-l}|
\le &\ \frac{8 \omega}{\pi}
\frac{1}{(r-m)^2(m-l)},\ m\ge n+1.
\end{align*}
%
%no indentation
%
We estimate
\begin{equation*}
s_{rl}:=\sum_{m\le 0}  \frac{1}{(r-m)(m-l)^2}
\le C\frac{1+\log n}{n^2},
\end{equation*}
and
\begin{equation*}
q_{rl}:=\sum_{m\ge n+1} \frac{1}{(r-m)^2(m-l)}
\le C\frac{1+\log n}{n^2}.
\end{equation*}
Therefore
\begin{equation*}
\sum_{r=1}^{k-1}\sum_{l=k}^n s_{rl}^2
+ \sum_{r=1}^{k-1}\sum_{l=k}^n q_{rl}^2
\le \frac{C(1+\log^2 n)}{n^2}
\end{equation*}
Combining these bounds we arrive at the estimate
\begin{equation}\label{case3:eq}
\sum_{r=1}^{k-1}\sum_{l=k}^n |f^{(n)}_{rl} - b^{(n)}_{rl}|^2
\le \frac{C\omega^2(1+\log^2n)}{n^2}.
\end{equation}

{\bfseries Case 4: $k\le r \le n$, $1 \le l\le k-1$ (lower-left block in Figure~\ref{fig:Fig2}).}
Since the matrices $F^{(n)}$ and $B^{(n)}$ are Hermitian,
we can use the estimate obtained in Case 3, and hence
\begin{equation}\label{case4:eq}
\sum_{r=k}^n\sum_{l=1}^{k-1} |f^{(n)}_{rl} - b^{(n)}_{rl}|^2
\le \frac{C \omega^2(1+\log^2n)}{n^2}.
\end{equation}

{\bfseries End of proof.} We estimate the norm by the Hilbert-Schmidt norm:
\begin{equation*}
\|F_n - B_n\| \le \|F_n - B_n\|_{\mathfrak S_2}.
\end{equation*}
In view of \eqref{case1:eq}, \eqref{case2:eq},\eqref{case3:eq}, \eqref{case4:eq},
\[
\|F_n - B_n\|_{\mathfrak S_2}\le C\omega \frac{1+\log n}{n},
\]
which coincides with the proclaimed estimate.
\end{proof}

Our next step is to estimate the difference between eigenvalues of $B_{n+\omega}$ and
$F_n$. Instead of the eigenvalues themselves, it
is more convenient to work with their counting function. Denote by
$n_+(\lambda, S), \lambda >0,$ the number of eigenvalues
of a Hermitian matrix $S$ strictly above $\lambda$.

\begin{lemma} \label{subspace:lem} Suppose that $\lambda >0$ and
$n\ge K \omega\lambda^{-1}$ with a sufficiently large constant $K>0$. Then
\[
n_+\biggl(\lambda + \frac{C\omega}{\lambda}\frac{1+\log^2 n}{n} , F_n\biggr)
\le n_+(\lambda, B_{n+\omega})\le n_+\biggl(\lambda
- \frac{C\omega}{\lambda}\frac{1+\log^2 n}{n} , F_n\biggr).
\]
\end{lemma}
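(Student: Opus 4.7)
The plan is to compare $B_{n+\omega}$ with $\Xi_{n,\omega} B_{n+\omega}\Xi_{n,\omega}$, whose positive spectrum coincides with that of $F_n$ under the unitary ``block reassembly'' map identifying $\mathrm{Ran}(\Xi_{n,\omega})$ with $\mathbb{C}^n$, and to treat $D_{n+\omega}$ as a perturbation controlled by Lemma~\ref{LemmaNormC}. Both counting inequalities will then follow from a min-max argument on witnessing subspaces.

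\textbf{Lower bound.} The inequality $n_+(\lambda+\delta,F_n)\le n_+(\lambda,B_{n+\omega})$ is essentially free. For $w\in\mathrm{Ran}(\Xi_{n,\omega})$ the coordinates $w_r$ with $k\le r\le k+\omega-1$ vanish, so \eqref{Eqc} gives at once $(D_{n+\omega}w,w)=0$, whence $(B_{n+\omega}w,w)=(\Xi_{n,\omega} B_{n+\omega}\Xi_{n,\omega} w,w)=(F_n w,w)$ under the identification above. Lifting a witnessing subspace for $n_+(\lambda,F_n)$ into $\mathrm{Ran}(\Xi_{n,\omega})$ therefore yields $n_+(\lambda+\delta,F_n)\le n_+(\lambda,F_n)\le n_+(\lambda,B_{n+\omega})$ for any $\delta\ge 0$.

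\textbf{Upper bound.} Let $V$ have dimension $N=n_+(\lambda,B_{n+\omega})$, with $(B_{n+\omega}u,u)>\lambda\|u\|^2$ for every nonzero $u\in V$. First I would show that $\Xi_{n,\omega}|_V$ is injective: a nonzero $u\in V\cap\mathrm{Ran}(\Pi_{n,\omega})$ satisfies $u=\Pi_{n,\omega}u$, so $(B_{n+\omega}u,u)=(\Pi_{n,\omega}B_{n+\omega}\Pi_{n,\omega}u,u)\le \|\Pi_{n,\omega}B_{n+\omega}\Pi_{n,\omega}\|\|u\|^2\le(C\omega/n)\|u\|^2$ by the Hilbert--Schmidt estimate \eqref{pi:eq}, which contradicts $(B_{n+\omega}u,u)>\lambda\|u\|^2$ once $n\ge K\omega\lambda^{-1}$ with $K$ large. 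Hence $\dim\Xi_{n,\omega}(V)=N$. Next, for $u\in V$ write $(\Xi_{n,\omega} B_{n+\omega}\Xi_{n,\omega} u,u)=(B_{n+\omega}u,u)-(D_{n+\omega}u,u)$ and invoke Lemma~\ref{LemmaNormC} with $\epsilon=\lambda/2$. For $n\ge K\omega\lambda^{-1}$ one has $\epsilon+C\omega/n\le\lambda$, so the $\|\Pi_{n,\omega} u\|^2$ contribution acquires a non-negative coefficient and may be discarded, leaving
\[
(B_{n+\omega}\Xi_{n,\omega} u,\Xi_{n,\omega} u)>\Big(\lambda-\tfrac{2C\omega}{\lambda}\tfrac{1+\log^2 n}{n}\Big)\|\Xi_{n,\omega} u\|^2.
\]
Setting $v=\Xi_{n,\omega}u$ and transporting back to $\mathbb{C}^n$, the $N$-dimensional subspace $\Xi_{n,\omega}(V)\subset\mathrm{Ran}(\Xi_{n,\omega})$ witnesses $n_+(\lambda-\delta,F_n)\ge N$ with $\delta$ of the required form.

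\textbf{Main obstacle.} The crucial difficulty is the asymmetric nature of the form bound in Lemma~\ref{LemmaNormC}: the parameter $\epsilon$ controls the $\|\Pi u\|^2$ coefficient linearly and the $\|\Xi u\|^2$ coefficient inversely, so a Goldilocks choice $\epsilon\asymp\lambda$ is forced --- large enough that $\lambda-\epsilon-C\omega/n\ge 0$ (which is precisely what drives the hypothesis $n\ge K\omega\lambda^{-1}$), and small enough that $(C\omega/\epsilon)(1+\log^2 n)/n$ remains of the target order $\delta$. A secondary subtlety is the injectivity of $\Xi_{n,\omega}|_V$: without it the image could lose up to $\omega$ dimensions, inserting a spurious $-\omega$ into the counting inequality that the statement of Lemma~\ref{subspace:lem} does not permit; this is saved by the smallness of $\|\Pi_{n,\omega}B_{n+\omega}\Pi_{n,\omega}\|$.
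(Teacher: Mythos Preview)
Your argument is correct and uses the same core idea as the paper: invoke Lemma~\ref{LemmaNormC} with the choice $\epsilon=\lambda/2$ (this is precisely what forces the hypothesis $n\ge K\omega\lambda^{-1}$) and compare $B_{n+\omega}$ to $\Xi_{n,\omega}B_{n+\omega}\Xi_{n,\omega}\cong F_n$. The only difference is packaging: the paper converts the form bound \eqref{upperd:eq} into two-sided operator inequalities $B_{n+\omega}\lessgtr \Xi B\Xi\pm\alpha\Xi\pm\beta\Pi$ and then splits $n_+$ along the orthogonal direct sum (the $\beta\Pi$-block contributes nothing once $\beta<\lambda$), whereas you work directly with witnessing subspaces and therefore need the separate injectivity check for $\Xi_{n,\omega}|_V$. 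Your lower bound is in fact slightly sharper than the paper's, since the observation $(D_{n+\omega}w,w)=0$ for $w\in\mathrm{Ran}(\Xi_{n,\omega})$ yields $n_+(\lambda,F_n)\le n_+(\lambda,B_{n+\omega})$ with no shift at all.
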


\begin{proof}
We use the projections $\Xi_{n,\omega}, \Pi_{n,\omega}$ introduced earlier. Recall that
\[
B_{n+\omega} = \Xi_{n,\omega} B_{n+\omega}\Xi_{n,\omega} + D_{n+\omega}.
\]
By Lemma \ref{LemmaNormC} this implies that
\begin{align*}
B_{n+\omega}\le &\ \Xi_{n,\omega} B_{n+\omega}
\Xi_{n,\omega} + \frac{C\omega}{\delta}\frac{1+\log^2 n}{n} \Xi_{n, \omega}
+ (\delta + C\omega n^{-1}) \Pi_{n,\omega},\\[0.2cm]
B_{n+\omega}\ge &\ \Xi_{n,\omega}
B_{n+\omega} \Xi_{n,\omega} - \frac{C\omega}{\delta}\frac{1+\log^2 n}{n} \Xi_{n, \omega}
- (\delta + C\omega n^{-1}) \Pi_{n,\omega},
\end{align*}
for any $\delta >0$.
Thus  the counting function $n_+(\lambda, B_{n+\omega})$
satisfies the following estimates:
\begin{equation}\label{bxi:eq}
n_+(\lambda, B_{n+\omega})\le n_+\biggl(\lambda, \Xi_{n,\omega} B_{n+\omega}
\Xi_{n,\omega} + \frac{C\omega}{\delta}\frac{1+\log^2 n}{n} \Xi_{n,\omega}\biggr)
+ n_+(\lambda, (\delta + C\omega n^{-1}) \Pi_{n,\omega}),
\end{equation}
\begin{equation}\label{bxi1:eq}
n_+(\lambda, B_{n+\omega})\ge n_+\biggl(\lambda, \Xi_{n,\omega} B_{n+\omega}
\Xi_{n,\omega} - \frac{C\omega}{\delta}\frac{1+\log^2 n}{n} \Xi_{n,\omega}\biggr),
\end{equation}
with an arbitrary $\delta >0$. Now take $\delta = \lambda/2$, so that
for sufficiently large $K$ under the condition $n\ge K \omega\lambda^{-1}$ we have
$\delta + C\omega n^{-1} < \lambda$. Therefore the second term on the right-hand-side of
\eqref{bxi:eq} equals zero.
The matrix $F_n$ is obviously similar to $\Xi_{n,\omega} B_{n+\omega}\Xi_{n,\omega}$, so that
their positive eigenvalues coincide. Thus \eqref{bxi:eq} and \eqref{bxi1:eq}
lead to the required inequalities.
\end{proof}

\begin{proof}[Proof of Theorem \ref{Th2}]
By Lemma \ref{LemmaD}, the elementary perturbation theory
yields:
\[
n_+\biggl(\lambda + \frac{C\omega (1+\log n)}{n} , B_n\biggr)
\le n_+(\lambda, F^{(n)})\le
n_+\biggl(\lambda - \frac{C\omega (1+\log n)}{n} , B_n\biggr),
\]
for any $\lambda \in (0, 1]$. Using this bound in combination
with Lemma \ref{subspace:lem}, we get
\begin{equation*}
n_+\biggl(\lambda + \frac{C\omega}{\epsilon}\frac{1+\log^2 n}{n} , B_{n+\omega}\biggr)
\le n_+(\lambda, B_n)\le n_+\biggl(\lambda
- \frac{C\omega}{\epsilon}\frac{1+\log^2 n}{n} , B_{n+\omega}\biggr),
\end{equation*}
for all $\lambda\in [\epsilon, 1]$ and   $n\ge K \omega\epsilon^{-1}$, with a sufficiently large
constant $K$.
This means that if $\lambda_j(B_n)>\epsilon$,
then
\[
|\lambda_j(B_n) - \lambda_j(B_{n+\omega})|\le \frac{C\omega}{\epsilon}\frac{1+\log^2 n}{n},
\]
for all $n\ge K \omega \epsilon^{-1}$.
\end{proof}

As we have already pointed out, Theorem \ref{Th1} follows from Theorem \ref{Th2}
due to the equality $\mu^{(n)}_j = 1 - \lambda_j(B_n)$.
%
%Removed \end{proof}
%
%%%%%%%%%%%%%%%%%%

\end{document}